\theoremstyle{plain} 
\newtheorem{theorem}{Theorem}[section] 
\newtheorem{corollary}[theorem]{Corollary}
\newtheorem{proposition}[theorem]{Proposition}
\theoremstyle{definition} 
\newtheorem{definition}[theorem]{Definition}
\newtheorem{remark}[theorem]{Remark}
\newtheorem{assumption}[theorem]{Assumption}
\newcommand{\re}{\mathbb{R}}
\newcommand{\com}{\mathbb{C}}
\newcommand{\na}{\mathbb{N}}
\newcommand{\calS}{\mathcal{S}}
\def\dbar{\mbox{\setbox0=\hbox{$d$}$d$\kern-.75\wd0\vbox{%
\hrule height.1ex width.80\wd0\kern1.2ex}}}
\title
{
Singularities for solutions to time dependent Schr\"odinger equations 
with sub-quadratic potential
}
\author
{
Keiichi Kato\footnote{Department of Mathematics, Tokyo University of
Science, 
Kagurazaka 1-3, Shinjuku-ku, Tokyo 162-8601, Japan, 
e-mail: kato@ma.kagu.tus.ac.jp}
 \  and Shingo Ito\footnote{College of Liberal Arts and Sciences, Kitasato University
Kitasato 1-15-1, Minami-ku, Sagamihara, Kanagawa 252-0373, Japan}
}
\date{}
\begin{document}
\maketitle
\abstract{
 In this article, we determine the
wave front sets of solutions to time dependent Schr\"odinger equations 
with a sub-quadratic potential
by using  the representation of the Schr\"odinger
evolution operator via
wave packet transform (short time Fourier transform). 
}
\section{Introduction}
In this article, we consider the following initial value problem 
of the time dependent Schr\"odinger equations,
\begin{equation}
\begin{cases}
 i \partial_t u + \frac{1}{2}\triangle u -V(t,x)u = 0,  &(t,x) \in \re \times 
 \re^n, \\
u(0,x)  = u_0(x), & x \in \re^n,
\end{cases}
\label{LS}
\end{equation}
where $i = \sqrt{-1}$, $u:\re\times \re^n \rightarrow \com$, 
$\triangle = \sum_{j=1}^{n}\frac{\partial^2}{\partial x_j^2}$
and $V(t,x)$ is a real valued function.
\par
We shall determine  the wave front sets of solutions 
to the Schr\"odinger equations \eqref{LS} with a sub-quadratic potential $V(t,x)$
by using 
the representation of the Schr\"odinger evolution operator 
introduced in \cite{K-K-I-1} and \cite{K-K-I-2}
 via the wave packet transform which is defined by A.~C\'ordoba and C.~Fefferman
\cite{C-F-1}. In particular, we determine the location of all 
the singularities of 
the solutions from the information of the initial data. 
\par
We assume the following assumption on $V(t,x)$. 
\begin{assumption}
 \label{ass-V}
$V(t,x)$ is a real valued function in $C^\infty (\re\times \re^n)$ and 
there exists a positive constant $\rho$ such that $0\le \rho <2$ and 
 for all multi-indices $\alpha $,
$$
\left|\partial^\alpha_x V(t,x)\right| \le C (1+|x|)^{\rho -|\alpha |}
$$
holds for some $C>0$ and for all $(t,x)\in \re \times \re^n$. 
\end{assumption}
\par
Let $\varphi \in \calS (\re^n)\backslash \{0\}$ and $f\in
\calS'(\re^n)$. 
We define the wave packet transform $W_{\varphi} f(x,\xi )$ of $f$
with the wave packet generated by a function $\varphi$ as follows: 
$$
W_{\varphi}f(x,\xi ) = \int_{\re^n} \overline{\varphi (y-x)}
f(y)e^{-iy\xi}dy, 
\quad x, \xi \in \re^n. 
$$
In the sequel, we call the function $\varphi$ in the definition of 
wave packet transform {\it basic wave packet}. 
Wave packet transform is called short time
Fourier transform in several literatures(\cite{Grochenig-1}). 
\par
In the previous paper {\cite{K-K-I-1}}, we give a representation of 
the Schr\"odinger evolution 
operator of a free particle, which is the following:
\begin{equation}
  W_{\varphi^{(t)}}u(t, x,\xi ) = 
 e^{-\frac{i}{2}t|\xi |^2}
 W_{\varphi_0}u_0(x-\xi t,\xi ) , 
\label{RPLS}
\end{equation}
where $\varphi^{(t)} = \varphi^{(t)}
(x)=U_0(t)\varphi_0(x)$
with $U_0(t)=e^{i(t/2)\triangle}$, 
$\varphi_0 (x)\in \calS (\re^n)\backslash \{0\}$
and \break $ W_{\varphi^{(t)}}u(t,x,\xi ) =
 W_{\varphi^{(t)} (\cdot)}[u(t,\cdot )](x,\xi )$. In the following, 
we use this convention $W_{\varphi^{(t)}}u(t,x,\xi ) \allowbreak = 
W_{\varphi^{(t)} (\cdot)}[u(t,\cdot )](x,\xi )$ for simplicity, 
if it is not possible to confuse.
 \par
In order to state our results precisely, we prepare several notations. 
Let $b$ be a real number with $0<b<1$. 
For $\varphi_0 (x) \in \calS (\re^n)$, 
we put 
$
\varphi^{(t)}(x) = U_0(t)\varphi_0 (x)
$
with $U_0(t)=e^{i(t/2)\triangle}$, 
$(\varphi_0)_\lambda (x) =\lambda^{nb/2}\varphi_0 (\lambda^{b} x)$
and $\varphi_{\lambda}^{(t)}(x) = 
U_0(t)\left(\varphi_0\right)_\lambda 
(x)$ for $\lambda \ge 1$. 
For $(x_0,\xi_0)\in \re^n\times \re^n\backslash \{0\}$, 
we call a subset $V=K\times \Gamma $ of $\re^{2n}$ 
a conic neighborhood of $(x_0,\xi_0)$ 
if $K$ is a neighborhood of $x_0$ and $\Gamma$ is a conic 
neighborhood of $\xi_0$ (i.e. $\xi \in \Gamma $ and $\alpha >0$
implies $\alpha \xi \in \Gamma$). 
For $\lambda \ge 1 $ and $(x,\xi ) \in \re^n \times \re^n$, 
let $x(s;t,x,\lambda \xi)$ and 
$\xi (s;t,x,\lambda \xi)$ be the solutions 
to 
\begin{equation}
 \begin{cases}
  \dot{x}(s)& = \xi (s), \quad x(t) = x, \\
  \dot{\xi }(s) & = -\nabla V(s,x(s)), \quad \xi (t) = \lambda \xi .
 \end{cases}
\end{equation}
The following theorem is our main result. 
\begin{theorem}
Assume Assumption \ref{ass-V}. 
Take $b=\min \left(\frac{2-\rho}{4}, \frac{1}{4}\right)$. 
 Let $u_0(x) \in L^2 (\re^n)$ and 
$u(t,x)$ be a solution of \eqref{LS} in $C(\re ; L^2(\re^n))$. 
Then under the assumption \ref{ass-V}, 
$(x_0,\xi_0) \notin WF(u(t,x))$
 if and only if 
there exists a conic neighborhood $V = K\times \Gamma $ of
$(x_0,\xi_0)$ such that for all $N\in \na$, for all $a\ge 1$ and 
for all $\varphi_0(x)\in \calS (\re^n )\backslash \{0\}$, 
there exists a constant $C_{N,a, \varphi_0}>0$ satisfying 
\begin{equation}
|W_{\varphi_\lambda^{(-t)}}u_0(x(0;t,x,\lambda\xi ) ,\xi (0;t,x,\lambda\xi ) )| 
\le C_{N,a,\varphi_0} \lambda^{-N}
\label{181007_7Aug14}
\end{equation}
for $\lambda \ge 1$, $a^{-1} \le |\xi |\le a$ and $(x,\xi )\in V$. 
\label{th-1-2}
\end{theorem}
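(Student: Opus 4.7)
The plan is to combine a wave-packet characterization of the wave front set with a representation formula for $W_{\varphi_\lambda^{(t)}}u(t,x,\lambda\xi)$ that extends \eqref{RPLS} to the sub-quadratic case by transporting the wave packet along the Hamiltonian flow of $\tfrac{1}{2}|\xi|^2+V(t,x)$, in the spirit of \cite{K-K-I-1} and \cite{K-K-I-2}.

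First I would reduce \eqref{181007_7Aug14} to an equivalent statement at time $t$: namely, that $(x_0,\xi_0)\notin WF(u(t,\cdot))$ if and only if there exists a conic neighborhood $V$ of $(x_0,\xi_0)$ such that
$$
|W_{\varphi_\lambda^{(t)}}u(t,x,\lambda\xi)|\le C_{N,a,\varphi_0}\lambda^{-N}
$$
holds for $(x,\xi)\in V$, $a^{-1}\le|\xi|\le a$, $\lambda\ge 1$, and every Schwartz window $\varphi_0$. This is the usual wave-packet (FBI) characterization of $WF$ applied to the Schwartz window $\varphi_\lambda^{(t)}=U_0(t)(\varphi_0)_\lambda$; the $\lambda^b$-rescaling is harmless since $0<b<1$, and independence of the window follows from a standard change-of-window argument.

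Next I would derive the representation formula. Differentiating $W_{\varphi_\lambda^{(t)}}u(t,x,\xi)$ in $t$, using that $\varphi_\lambda^{(t)}$ satisfies the free Schr\"odinger equation while $u$ solves \eqref{LS}, and Taylor expanding $V(t,y)=V(t,x)+\nabla V(t,x)\cdot(y-x)+r_2(t,x,y)$ under the defining integral, produces a transport equation for $W_{\varphi_\lambda^{(t)}}u$ whose characteristics are precisely the bicharacteristic strips $(x(s;t,x,\lambda\xi),\xi(s;t,x,\lambda\xi))$. Integrating from $s=t$ down to $s=0$ should yield an identity of the form
$$
W_{\varphi_\lambda^{(t)}}u(t,x,\lambda\xi)=e^{i\Phi_\lambda}\,W_{\varphi_\lambda^{(-t)}}u_0(X_0,\Xi_0)+R_\lambda,
$$
with $(X_0,\Xi_0)=(x(0;t,x,\lambda\xi),\xi(0;t,x,\lambda\xi))$ and $\Phi_\lambda$ a real phase; specializing to $V\equiv 0$ recovers \eqref{RPLS} modulo the backward free propagation of the window.

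The crux, which I expect to be the main obstacle, is to show $|R_\lambda|=O(\lambda^{-N})$ for every $N$ uniformly on the relevant region. The remainder is built from the quadratic Taylor remainder $r_2$ integrated against the rescaled window $(\varphi_0)_\lambda(y-x)$, which concentrates on $|y-x|\le C\lambda^{-b}$, together with the oscillation $e^{-iy\xi}$ of frequency of order $\lambda$. On a bounded time interval the trajectory stays in $|x(s)|+|\xi(s)|\le C\lambda$, so Assumption \ref{ass-V} gives $|\partial_y^\alpha r_2|\le C\lambda^{\rho-|\alpha|}$ for $|\alpha|\ge 2$, while each non-stationary integration by parts in $y$ gains a factor of order $\lambda^{-(1-2b)}$ (balancing the spatial concentration scale $\lambda^{-b}$ against the frequency scale $\lambda$). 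The choice $b=\min((2-\rho)/4,1/4)$ is exactly tuned so that $2b<1$ and $2b<2-\rho$, which guarantees a strict gain per iteration of a Duhamel expansion of the transport equation and therefore arbitrary negative powers of $\lambda$. Combined with the reduction of the first step, this yields both directions of the equivalence via the diffeomorphism property of the Hamiltonian flow.
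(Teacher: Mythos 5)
Your overall architecture coincides with the paper's: reduce via the all-windows wave-packet characterization of $WF$ (Proposition \ref{folland-lemma}), convert \eqref{LS} into a transport equation for $W_{\varphi^{(t)}}u$ along the bicharacteristics, and iterate the Duhamel formula to gain powers of $\lambda$. However, the step you yourself identify as the crux contains a genuine gap, in two places.

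First, your bound $|\partial_y^\alpha r_2|\le C\lambda^{\rho-|\alpha|}$ does not follow from the trajectory staying inside $|x(s)|+|\xi(s)|\le C\lambda$. Since $\rho-|\alpha|<0$ for $|\alpha|\ge 2$, an \emph{upper} bound on $|x(s)|$ only gives the useless estimate $(1+|x(s)|)^{\rho-|\alpha|}\le 1$; to extract decay in $\lambda$ you need the \emph{lower} bound $|x(s;t_0,x,\lambda\xi)|\ge \frac{1}{2a}\lambda|s-t_0|$, which is the content of \eqref{ode-est} and requires a separate argument (the paper proves it by Picard iteration in Appendix A, and it only holds for $|s-t_0|\ge\lambda^{-2b}$ and $\lambda\ge\lambda_0$). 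Near $s=t_0$ no such lower bound is available, and one must instead exploit that $(y-x^*)^\alpha$ tested against the $\lambda^{-b}$-concentrated window contributes $\lambda^{-b|\alpha|}\le\lambda^{-2b}$; the choice $b=\min\bigl(\frac{2-\rho}{4},\frac14\bigr)$ is tuned precisely so that both regimes give the same gain $\lambda^{-2b}$ (note the correct condition is $4b\le 2-\rho$ and $4b\le 1$, not $2b<2-\rho$). Second, your proposed mechanism of "non-stationary integration by parts in $y$" against $e^{-iy\xi}$ is not available: the integrand contains $u(s,y)$, which is only $L^2$ in $y$. The paper circumvents this by reinserting the inversion formula for the wave packet transform, after which the integration by parts in $y$ produces decay in $|\xi^*-\eta|$ (needed for integrability), not a power of $\lambda$; the $\lambda$-gain comes entirely from the two regimes above plus a near/far cutoff in $|y-x^*|$. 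Finally, a structural point you omit: the Taylor terms turn $W_{\varphi_\lambda^{(t^*)}}u$ into wave packet transforms with the new windows $x^\gamma\partial^\beta\varphi_0$, so the induction hypothesis $P(\sigma,\varphi_0)$ must be assumed for \emph{all} nonzero Schwartz windows simultaneously in order to conclude $P(\sigma+2b,\varphi_0)$; this is exactly why condition (iii) of Proposition \ref{folland-lemma} (uniformity in the window) is needed and why the theorem is stated for all $\varphi_0$.
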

\begin{remark}
 $W_{\varphi_\lambda^{(-t)}}u_0(x,\xi )$ is the wave packet transform 
of $u_0(x)$ with a basic wave packet $\varphi_\lambda^{(-t)}(x)$. 
As previously stated, $\varphi_\lambda^{(-t)}(x)$ depends on $b$. 
\end{remark}
\begin{remark}
 In \cite{K-K-I-2}, the authors investigate the wave front sets 
of solutions to Schr\"odinger equations of a free particle 
and a harmonic oscillator via the wave packet transformation. 
In \cite{K-K-I-4}, the authors give a partial result of the problem 
which is discussed in this paper by the aide of characterization 
of wave front set by G. B. Folland and T. $\bar{\mathrm O}$kaji. 
Characterization of wave front set is discussed in Section 2. 
\end{remark}
\begin{remark}
In one space dimension, 
if $V(t,x)=V(x)$ is super-quadratic in the sense that 
$V(x) \ge C(1+|x|)^{2+\epsilon}$ with some $\epsilon >0$, 
K.~Yajima \cite{Yajima-1} shows that
the fundamental solution of \eqref{LS} has singularities everywhere. 
\end{remark}
\begin{corollary}
Assume Assumption \ref{ass-V} with  $\rho <1$. 
Take $b = \min \left( \frac{1}{4}, 1-\rho \right)$. 
Then $(x_0,\xi_0) \notin WF(u(t,x))$
 if and only if 
there exists a conic neighborhood $V = K\times \Gamma $ of
$(x_0,\xi_0)$ such that for all $N\in \na$, for all $a\ge 1$ and 
for all $\varphi_0(x)\in \calS (\re^n )\backslash \{0\}$,
there exists a constant $C_{N,a,\varphi_0}>0$ satisfying 
$$
|W_{\varphi_\lambda^{(-t)}}u_0(x - \lambda t \xi  ,\lambda \xi )| 
\le C_{N,a,\varphi_0} \lambda^{-N}
$$
for $\lambda \ge 1$, $a^{-1} \le |\xi |\le a$ and $(x,\xi )\in V$. 
\label{cor-1.5}
\end{corollary}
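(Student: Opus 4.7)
The plan is to derive Corollary~\ref{cor-1.5} from Theorem~\ref{th-1-2} by showing that, under the stronger decay assumption $\rho<1$, the Hamilton flow $(x(0;t,x,\lambda\xi),\xi(0;t,x,\lambda\xi))$ deviates from the free flow $(x-\lambda t\xi,\lambda\xi)$ by at most $O(\lambda^{\rho-1})$, a quantity dominated by the wave-packet position scale $\lambda^{-b}$ thanks to the choice $b\le 1-\rho$. For $\rho\le 3/4$ the corollary's $b=1/4$ coincides with the theorem's $b$, so Theorem~\ref{th-1-2} applies directly; for $\rho\in(3/4,1)$ the corollary uses the strictly smaller $b=1-\rho$, and one needs to check that the characterization (\ref{181007_7Aug14}) remains equivalent to $(x_0,\xi_0)\notin WF(u(t,x))$ for this smaller $b$, which is expected because $b$ only calibrates the wave-packet scale.

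Next I would bound the flow deviation. Setting $y(s)=x(s;t,x,\lambda\xi)-[x-(t-s)\lambda\xi]$ and $\eta(s)=\xi(s;t,x,\lambda\xi)-\lambda\xi$, one has $\dot y=\eta$, $\dot\eta=-\nabla V(s,x(s))$, with $y(t)=\eta(t)=0$. Assumption~\ref{ass-V} with $\rho<1$ gives $|\nabla V(s,x(s))|\le C(1+|x(s)|)^{\rho-1}$. For $(x,\xi)$ in a bounded set with $|\xi|\ge a^{-1}$ and $\lambda$ large, a bootstrap argument shows $|x(s)|\gtrsim(t-s)\lambda|\xi|$ on the trajectory, so
\[
|\eta(s)|\le\int_s^t C\bigl(1+(t-\sigma)\lambda|\xi|\bigr)^{\rho-1}\,d\sigma = O(\lambda^{\rho-1}),
\]
and then $|y(s)|=O(\lambda^{\rho-1})$. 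In particular,
$|x(0;t,x,\lambda\xi)-(x-\lambda t\xi)|+|\xi(0;t,x,\lambda\xi)-\lambda\xi|=O(\lambda^{\rho-1})=O(\lambda^{-b})$.

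Finally I would interchange the two characterizations by reparametrization. Assuming (\ref{181007_7Aug14}) on a conic neighborhood $V$ of $(x_0,\xi_0)$, fix a smaller conic neighborhood $V'$ compactly contained in $V$. For $(x,\xi)\in V'$ with $a^{-1}\le|\xi|\le a$ and $\lambda$ large, the flow-deviation estimate allows inversion of the Hamilton-flow map near $(x-\lambda t\xi,\lambda\xi)$, yielding $(x'',\xi'')=(x,\xi)+O(\lambda^{\rho-1})$ whose image under the Hamilton flow is exactly $(x-\lambda t\xi,\lambda\xi)$. For $\lambda$ large, $(x'',\xi'')\in V$ and $|\xi''|\in[(2a)^{-1},2a]$, so applying the hypothesis (\ref{181007_7Aug14}) at $(x'',\xi'')$ with $a'=2a$ gives the corollary's bound at $(x,\xi)$. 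The bounded range of $\lambda$ is absorbed into the constant using $|W_{\varphi_\lambda^{(-t)}}u_0|\le\|u_0\|_2\|\varphi_0\|_2$; the converse implication is symmetric.

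The main obstacle is the flow-deviation estimate of the second step, which requires a self-consistent (bootstrap) control of $|x(s)|$ along characteristic curves to exploit the decay of $|\nabla V|$, uniformly in $(x,\xi)$ over the conic neighborhood. A secondary technical point is the applicability of Theorem~\ref{th-1-2} at the corollary's smaller $b$ in the regime $\rho\in(3/4,1)$, which amounts to checking that the proof of Theorem~\ref{th-1-2} works unchanged for any admissible $b$.
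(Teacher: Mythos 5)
Your proposal is correct in substance, and its first step coincides with the paper's: the paper also begins by writing $x(0;t,x,\lambda\xi)=x-\lambda t\xi+\delta_1(\lambda)$ and $\xi(0;t,x,\lambda\xi)=\lambda\xi+\delta_2(\lambda)$ with $\delta_1,\delta_2=O(\lambda^{\rho-1})$, obtained from the integral equation \eqref{eq-9} together with the lower bound \eqref{ode-est} of Appendix A (your ``bootstrap'' is exactly that Picard-iteration argument). The second step, however, is genuinely different. The paper keeps the point $(x,\xi)$ fixed and Taylor-expands $W_{\varphi_\lambda^{(t^*)}}u_0$ at the shifted point $(x-\lambda t\xi+\delta_1,\lambda\xi+\delta_2)$ around $(x-\lambda t\xi,\lambda\xi)$; the error terms carry factors $\lambda^{b|\alpha|}\delta_1^{\alpha}\delta_2^{\beta}$, and this is precisely where the corollary's constraint $b\le 1-\rho$ is used (so that $\lambda^{b}\,|\delta_1|=O(\lambda^{b+\rho-1})$ stays bounded), with the remaining terms identified as wave packet transforms with modified windows $(\partial_x^{\alpha}\varphi)$ applied to $y^{\beta}u_0$. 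You instead absorb the $O(\lambda^{\rho-1})$ shifts exactly by reparametrizing the base point: since $(x(0;t,\cdot,\cdot),\xi(0;t,\cdot,\cdot))$ is a flow map, its inverse is just the forward flow, so no implicit-function argument is needed, and the preimage $(x'',\xi'')$ of $(x-\lambda t\xi,\lambda\xi)$ lies in $V$ for large $\lambda$; you then shrink the conic neighborhood and invoke \eqref{181007_7Aug14}. This buys you a shorter argument that does not use $b\le 1-\rho$ at all, at the price of having to justify applying Theorem \ref{th-1-2} with the corollary's smaller $b=1-\rho$ when $\rho\in(3/4,1)$ --- a point you correctly flag, and which does hold, since the proof of the theorem only requires $b\le\min\bigl(\tfrac{2-\rho}{4},\tfrac14\bigr)$ (smaller $b$ merely lengthens the induction in steps of $2b$), consistent with the paper's own remark that the characterization of Proposition \ref{folland-lemma} is valid for any $0<b<1$.
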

The idea to classify the singularities of generalized functions 
``microlocally'' has been introduced firstly by M.~Sato,  
J.~Bros and  D.~Iagolnitzer and L. H\"ormander 
independently around 1970. 
Wave front set is introduced by L.~H\"ormander in 1970
(see \cite{Hoermander-1}). It is proved in \cite{Hoermander-2} that 
the wave front set of solutions to the linear hyperbolic equations
of principal type propagates along the null bicharacteristics. 
\par
For Schr\"odinger equations, R.~Lascar \cite{Lascar-1} has treated 
singularities of solutions microlocally first. He has introduced 
quasi-homogeneous wave front set and has shown that 
the quasi-homogeneous wave front set of solutions is invariant 
under the Hamilton-flow of Schr\"odinger equation on each 
plane $t=\text{constant}$. 
C.~Parenti and F.~Segala \cite{P-S-1} and T.~Sakurai \cite{Sakurai-1} 
have treated the singularities of solutions to Schr\"odinger equations
in the same way. 
\par
Since the Schr\"odinger operator $i\partial_t +\frac{1}{2}\triangle $
commutes $x+it\nabla $, the solutions become smooth for $t>0$
if the initial data decay at infinity. 
W.~Craig, T.~Kappeler and W.~Strauss \cite{C-K-S-1} have treated 
this type of  smoothing property microlocally. 
They have shown for a solution of \eqref{LS} that 
for a point $x_0 \ne 0$ and a conic neighborhood
$ \Gamma$ of $x_0$, 
$ \langle x \rangle^{r}u_0(x) \in L^2 (\Gamma )$ implies 
$ \langle \xi \rangle^{r}\hat u(t,\xi ) \in L^2 (\Gamma' )$
for a conic neighborhood of $\Gamma' $ of $x_0$ and 
for $t\ne 0$, 
though they have considered more general operators. 
Several mathematicians have shown this kind of results for 
Schr\"odinger operators \cite{Doi-1}, \cite{Doi-2}, \cite{Nakamura-1}, 
\cite{Okaji-1}, \cite{Okaji-2}. 
\par
A.~Hassell and J.~Wunsch \cite{H-W-1} and S.~Nakamura \cite{Nakamura-2}
determine the wave front set of the solution by means of the initial
data. Hassell and Wunsch have studied the singularities by using 
``scattering wave front set''. 
Nakamura has treated the problem in semi-classical way. He has shown
that 
for a solution $u(t,x)$ of \eqref{LS}, 
$(x_0,\xi_0) \notin WF(u(t))$ if and only if 
there exists a $C_0^\infty $ function $a(x,\xi )$ in $\re^{2n}$ 
with $a(x_0,\xi_0)\ne 0$ such that 
$\Vert a(x+tD_x,hD_x)u_0  \Vert = O(h^\infty ) 
\text{ as }h\downarrow 0. $
On the other hand, we use the wave packet transform 
instead of the pseudo-differential operators. 
%
%
\section{Preliminaries}
In this section, we introduce 
the definition of wave front set $WF(u)$ and 
give the characterization of wave front set in terms of wave packet
transform. 
\begin{definition}[Wave front set]
For $f\in \calS' (\re^n)$, we say 
$(x_0,\xi_0) \not\in WF(f)$
 if there exist a function $\chi (x)$ in $C_0^\infty (\re^n)$ with 
$\chi (x_0)\ne 0$ and a conic neighborhood $\Gamma $ of $\xi_0$ such 
that for all $N \in \na $ there exists a positive constant $C_N$ satisfying
$$
|\widehat{\chi f}(\xi )| \le C_N (1+|\xi |)^{-N} 
$$
for all $\xi \in \Gamma $. 
\end{definition}
To prove Theorem \ref{th-1-2}, we use the following characterization 
of the wave front set, which is given in \cite{K-K-I-3}. 
For fixed $b$ with $0<b <1$, 
we put $\varphi_\lambda (x)= \lambda^{nb /2}\varphi (\lambda^{b}x)$. 
\begin{proposition}
Let $(x_0,\xi_0)\in \re^n$ and $u\in \mathcal{S}'(\re^n)$. 
The following conditions are equivalent.
\begin{enumerate}
 \item [(i)] $(x_0,\xi_0)\notin WF(u)$
 \item [(ii)] There exist  $\varphi \in \calS (\re^n )\backslash \{0\}$, 
a conic neighborhood $V$ of $(x_0, \xi_0)$
such that for all $N\in \na$ and for all $a\ge 1$ 
there exists a constant $C_{N,a}>0$ satisfying 
$$
|W_{\varphi_{\lambda}}f(x, \lambda \xi )| 
\le C_{N,a} \lambda^{-N}
$$
for $\lambda \ge 1$ and 
$(x,\xi) \in V$ with $a^{-1} \le |\xi |\le a$. 
 \item[(iii)] 
There exist
a conic neighborhood $V$ of $(x_0, \xi_0)$
such that for all $N\in \na$ and for all $a\ge 1$ 
there exists a constant $C_{N,a}>0$ satisfying 
$$
|W_{\varphi_{\lambda}}f(x, \lambda \xi )| 
\le C_{N,a} \lambda^{-N}
$$
for all $\varphi \in \calS (\re^n )\backslash \{0\}$, 
$\lambda \ge 1$ and $(x,\xi) \in V$ with $a^{-1} \le |\xi |\le a$. 
\end{enumerate}
\label{folland-lemma}
\end{proposition}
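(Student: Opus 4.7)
Since (iii)$\Rightarrow$(ii) is immediate (restrict the ``for all $\varphi$'' to any single choice), the plan is to prove (i)$\Rightarrow$(iii) and (ii)$\Rightarrow$(i), which chain with the trivial implication to give the full equivalence.

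\textbf{Proof of (i)$\Rightarrow$(iii).} Fix any $\varphi\in\calS(\re^n)\setminus\{0\}$. From the WF hypothesis obtain $\chi\in C_0^\infty(\re^n)$ with $\chi(x_0)\ne 0$ and a conic neighborhood $\Gamma\ni\xi_0$ on which $|\widehat{\chi u}(\eta)|\le C_N(1+|\eta|)^{-N}$. Shrink to a smaller $K\Subset\{\chi\equiv 1\}$ around $x_0$ and a smaller closed subcone $\Gamma'\Subset\Gamma$. Decompose $u=\chi u+(1-\chi)u$, giving $W_{\varphi_\lambda}u=W_{\varphi_\lambda}(\chi u)+W_{\varphi_\lambda}((1-\chi)u)$. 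The second term is handled by the Schwartz decay $|\varphi_\lambda(y-x)|\le C_M\lambda^{nb/2}(\lambda^b|y-x|)^{-M}$ on $\mathrm{supp}(1-\chi)$ (where $|y-x|\ge\mathrm{dist}(K,\mathrm{supp}(1-\chi))>0$), pairing against the tempered distribution $(1-\chi)u$ to yield $O(\lambda^{-N})$ for any $N$. For the first term, the identity $W_{\varphi_\lambda}(\chi u)(x,\lambda\xi)=\mathcal{F}_y[\overline{\varphi_\lambda(\cdot-x)}\chi u](\lambda\xi)$ combined with the convolution theorem and $\hat\varphi_\lambda(\zeta)=\lambda^{-nb/2}\hat\varphi(\zeta/\lambda^b)$ expresses the quantity as a convolution; splitting at $|\zeta|=c\lambda$ with $c$ small, on $|\zeta|\le c\lambda$ the point $\lambda\xi-\zeta$ stays in $\Gamma$ and the WF hypothesis supplies $\lambda^{-N}$, while on $|\zeta|\ge c\lambda$ one has $|\zeta/\lambda^b|\gtrsim\lambda^{1-b}$ and the Schwartz decay of $\hat\varphi$ dominates the polynomial growth of $\widehat{\chi u}$. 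All estimates are uniform in $(x,\xi)\in K\times\Gamma'$ with $a^{-1}\le|\xi|\le a$ and in $\varphi$ through fixed Schwartz seminorms, yielding (iii).

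\textbf{Proof of (ii)$\Rightarrow$(i).} Starting from the inversion identity
\begin{equation*}
u(y)=\frac{1}{(2\pi)^n\|\varphi\|_{L^2}^2}\iint\varphi_\lambda(y-x)\,W_{\varphi_\lambda}u(x,\zeta)\,e^{iy\zeta}\,dx\,d\zeta\qquad(\lambda\ge 1\text{ fixed}),
\end{equation*}
pick $\chi\in C_0^\infty$ with $\mathrm{supp}\,\chi\subset K$ (the base of the conic neighborhood in (ii)) and $\chi(x_0)\ne 0$. To bound $\widehat{\chi u}(\eta)$ for $\eta$ in a smaller conic neighborhood of $\xi_0$, set $\lambda=|\eta|$ and $\xi=\eta/\lambda$, so that
\begin{equation*}
\widehat{\chi u}(\eta)=\frac{1}{(2\pi)^n\|\varphi\|_{L^2}^2}\iint W_{\varphi_\lambda}u(x,\zeta)\Bigl(\int\chi(y)\varphi_\lambda(y-x)e^{iy(\zeta-\lambda\xi)}dy\Bigr)dx\,d\zeta.
\end{equation*}
Partition the $(x,\zeta)$-domain: in the region where $\zeta/\lambda$ lies in the conic neighborhood from (ii) with $a^{-1}\le|\zeta/\lambda|\le a$ for a fixed $a$, (ii) gives $|W_{\varphi_\lambda}u(x,\zeta)|=O(\lambda^{-N})$; in the complementary region, either $|\zeta-\lambda\xi|\gtrsim\lambda$ — kill by repeated integration by parts in $y$ against $e^{iy(\zeta-\lambda\xi)}$ — or $x$ is far from $\mathrm{supp}\,\chi$ — kill by Schwartz decay of $\varphi_\lambda(y-x)$. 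The polynomial growth of $W_{\varphi_\lambda}u$ in $(x,\zeta)$ from $u\in\calS'$ is absorbed by these gains, producing $|\widehat{\chi u}(\eta)|\le C_N(1+|\eta|)^{-N}$.

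The main obstacle is the (ii)$\Rightarrow$(i) direction: controlling the polynomial growth of $W_{\varphi_\lambda}u$ in the ``bad'' $(x,\zeta)$-region uniformly in $\lambda$ requires a delicate interplay among three scales — the oscillation scale $\lambda$, the frequency-localization scale $\lambda^b$ of $\hat\varphi_\lambda$, and the position-concentration scale $\lambda^{-b}$ of $\varphi_\lambda$. The assumption $0<b<1$ is exactly what makes the oscillation scale $\lambda$ strictly dominant, so that both non-stationary phase in $y$ and Schwartz decay of $\varphi_\lambda$ beat the $\calS'$-polynomial growth after integrating out $(x,\zeta)$.
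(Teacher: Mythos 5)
The paper does not actually prove Proposition \ref{folland-lemma}; it is quoted from the reference \cite{K-K-I-3} (with the remark that the $b=1/2$ proof there extends verbatim to $0<b<1$). Your sketch is the standard Folland--G\'erard--$\bar{\mathrm O}$kaji argument that this kind of reference uses: cutoff decomposition $u=\chi u+(1-\chi)u$ plus the convolution identity for (i)$\Rightarrow$(iii), and the inversion formula with $\lambda=|\eta|$, a good/bad region split, and non-stationary phase for (ii)$\Rightarrow$(i); the outline is correct, and your closing observation that each integration by parts trades $|\zeta-\eta|^{-1}\lesssim\lambda^{-1}$ against a cost of $\lambda^{b}$, so that $b<1$ is what makes the bad region negligible, is exactly the point that needs to be made. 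The only caveat is one of reading, not of proof: as literally stated, (iii) asks for a constant $C_{N,a}$ valid for \emph{all} $\varphi$, which is impossible (rescale $\varphi$); it must be read, as in Theorem \ref{th-1-2}, with $C_{N,a,\varphi}$ depending on finitely many Schwartz seminorms of $\varphi$ --- which is precisely what your argument delivers.
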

\begin{remark}
Characterization of wave front set by wave packet transform is firstly 
given by G. B. Folland \cite{Folland-1}. 
Folland \cite{Folland-1} has shown that the conclusion follows if 
the basic wave packet $\varphi$ is an even and nonzero. 
function in $\calS (\re^n)$ and $b=1/2$. 
P.~G\'erard \cite{Gerard-1} has shown (i) is equivalent to (ii) in Proposition
 \ref{folland-lemma} with  basic wave packet $\varphi (x) = e^{-x^2}$
(Proof is also in J.~M.~Delort \cite{Delort-1}). 
$\bar{\mathrm{O}}$kaji \cite{Okaji-1} has shown that if $\varphi $
 satisfies $\int x^\alpha \varphi (x)dx \ne 0$
for some multi-index $\alpha$. 
\end{remark}
\begin{remark}
Folland \cite{Folland-1} and $\bar{\mathrm{O}}$kaji \cite{Okaji-1} give 
the characterization for $b=1/2$. 
In \cite{K-K-I-3}, we give the characterization for $b=1/2$. 
Without any change of the proof, we can extend for $0<b<1$. 
\end{remark}
%
%
\section{Proofs of Theorem \ref{th-1-2} and  Corollary \ref{cor-1.5}}
In this section, we prove Theorem \ref{th-1-2} and Corollary \ref{cor-1.5}. 
\par
\begin{proof}[Proof of Theorem \ref{th-1-2}]
The initial value problem \eqref{LS} is transformed 
by the wave packet transform to 
\begin{equation}
\begin{cases}
 & \left(i\partial_t + i\xi\cdot \nabla_x -i\nabla_x V(t,x) \cdot\nabla_{\xi}
 -\frac{1}{2}|\xi |^2-\widetilde{V}(t,x)\right) \times\\
&\phantom{xxxxxxxxxxxxxxxxxxxxxxxxxxx} 
W_{\varphi^ {(t)}}u(t,x,\xi )
 =Ru(t,x,\xi ), \\
&W_{\varphi^{(0)}}u(0,x,\xi ) =  W_{\varphi_0}u_0(x,\xi ),
\end{cases}
\label{WPEQP}
 \end{equation}
where
$\widetilde{V}(t,x)= V(t,x)- \nabla_x V(t,x)\cdot x$ and 
\begin{multline*}
 Ru(t,x,\xi ) = \sum_{|\alpha |=2}\frac{1}{\alpha !}\int 
 \overline{\varphi^{(t)} (y-x)}\\
\times 
 \left(
 \int_{0}^{1}\partial^\alpha V(t,x+\theta (y-x))
 (1-\theta)d\theta
 \right)
 (y-x)^\alpha u(t,y)e^{-i\xi y}dy. 
\end{multline*}
Solving \eqref{WPEQP}, we have the integral equation 
\begin{multline*}
\label{Int-eq-Sch}
 W_{\varphi^{(t)}}u(t,x,\xi ) = 
e^{-i\int_{0}^{t}\{\frac{1}{2}|\xi (s;t,x,\xi )|^2+
\widetilde{V}(s,x(s;t,x,\xi))\}ds}
W_{\varphi_0}u_0(x(0;t,x,\xi),\xi (0;t,x,\xi))\\
-i\int_{0}^{t}
e^{-i\int_{s}^{t}\{\frac{1}{2}|\xi
 (s_1;t,x,\xi)|^2+\widetilde{V}(s_1,x(s_1;t,x,\xi))\}ds_1}
Ru(s,x(s;t,x,\xi),\xi (s;t,x,\xi))ds,
\end{multline*}
where $x(s;t,x,\xi)$ and $\xi (s;t,x,\xi)$ are the solutions of 
\begin{equation*}
 \begin{cases}
  \dot{x}(s)&= \xi (s), \ x(t)=x, \\
 \dot{\xi}(s)&= -\nabla_x V(s,x(s)), \ \xi (t)= \xi. 
 \end{cases}
\end{equation*}
\par
For fixed $t_0$, we have 
\begin{multline}
\label{int-eq-1}
 W_{\varphi_{\lambda }^{(t-t_0)}}
u(t,x(t;t_0,x,\lambda \xi ),\xi (t;t_0,x ,\lambda \xi ) ) \\ 
= 
e^{-i\int_{0}^{t}\{\frac{1}{2}|\xi (s;t_0,x,\lambda \xi)|^2
+\widetilde{V}(s,x(s;t_0,x, \lambda \xi ))\}ds}
W_{\varphi_{\lambda }^{(-t_0)}}u_0(x(0;t_0,x, \lambda \xi),
\xi (0;t_0,x, \lambda \xi ))\\
-i\int_{0}^{t}
e^{-i\int_{s}^{t}\{\frac{1}{2}
|\xi(s_1, t_0,x,\lambda \xi )|^2
+\widetilde{V}(s_1,x(s_1;t_0,x, \lambda \xi ))\}ds_1}
Ru(s,x(s;t_0,x, \lambda \xi ),\xi (s;t_0,x,\lambda \xi ))ds, 
\end{multline}
substituting 
$(x(t;t_0,x, \lambda \xi ), \xi (t;t_0,x, \lambda \xi ))$ 
and $\varphi_\lambda ^{(-t_0)}(x)$
for $(x,\xi )$ and $\varphi_0 (x)$ respectively. 
Here we use the fact that 
\begin{align*}
 x(s;t,x(t;t_0,x,\lambda \xi ), \xi(t;t_0,x,\lambda \xi))
 &=x(s;t_0, x,\lambda \xi ),\\
 \xi (s;t,x(t;t_0,x,\lambda \xi ), \xi(t;t_0,x,\lambda \xi))
 &=\xi (s;t_0, x,\lambda \xi )
\end{align*}
and $e^{\frac{i}{2}t\triangle} \varphi_{\lambda}^{(-t_0)}(x)
= \varphi_{\lambda }^{(t-t_0)}(x)$. 
\par
We fix $a\ge 1$. 
Let $V= K\times \Gamma $ be a neighborhood of $(x_0,\xi_0)$ satisfying 
\eqref{181007_7Aug14} for $\lambda \ge 1 $, $a^{-1}\le |\xi |\le a$ and 
$(x,\xi )\in V$. 
We only show the sufficiency here because the necessity is proved 
in the same way. To do so, we show that 
the following assertion 
$P(\sigma , \varphi_0)$ holds for all $\sigma \ge 0$ 
and for all $\varphi_0 \in \calS (\re^n)\backslash \{0\}$. 
\newline
$P(\sigma , \varphi_0)$: 
``
For $a\ge 1$ there exists
a positive constant $C_{\sigma, a, \varphi_0}$ 
such that 
\begin{equation}
|W_{\varphi^{(t-t_0)}_\lambda }
u(t, x(t;t_0,x,\lambda \xi),\xi (t;t_0,x,\lambda \xi) )| \le
C_{\sigma ,a, \varphi_0}\lambda ^{-\sigma}
\label{ineq-6}
\end{equation}
for all $x \in K $, all $\xi \in \Gamma$
with $1/a \le |\xi | \le a $, all $\lambda \ge 1$
and $0 \le t \le t_0$. ''
\par
In fact, taking $t=t_0$, we have $\varphi^{(t_0-t_0)}_\lambda
 =\left(\varphi_0\right)_\lambda $, 
$x(t_0;t_0,x,\lambda \xi)= x$ and 
$\xi (t_0;t_0,x,\lambda \xi)=\lambda\xi$. Hence 
from \eqref{ineq-6}, we have immediately
$$
|W_{(\varphi_0)_\lambda}
u(t_0, x,\lambda\xi )| \le
C_{\sigma ,a, \varphi_0}\lambda ^{-\sigma}
$$
for $\lambda \ge 1$, $x\in K$ and $\xi \in \Gamma$ with $1/a\le |\xi
 |\le a$. 
 This and Proposition \ref{folland-lemma} show the sufficiency.
\par
We write $x^* = x(s;t_0,x,\lambda\xi ), \xi^* = \xi (s;t_0,x,\lambda \xi
)$, $t^* = s-t_0$ and 
$\varphi_\lambda (x)=\left(\varphi_0\right)_\lambda (x) $
 for simple description. 
\par
We show by induction with respect to $\sigma $
that $P(\sigma , \varphi_0)$ holds for all 
$\sigma \ge 0$ and for all $\varphi_0 \in \calS (\re^n )\backslash
 \{0\}$.
\par
First we show that $P(0, \varphi_0)$ 
holds for all $\varphi_0 \in \calS (\re^n )$. 
Since $u_0(x)\in L^2(\re^n)$, $u(t,x) \in C(\re ; L^2(\re^n))$. 
Schwarz's inequality and conservativity for  $L^2$ norm of solutions of
\eqref{LS} show that 
\begin{align*}
& \left|W_{\varphi_\lambda^{(t-t_0)} }
u(t, x(t;t_0,x,\lambda \xi ),\lambda \xi(t;t_0,x,\lambda \xi ) )\right| \\
&
\le \int |\varphi_\lambda^{(t-t_0)} (y-x(t;t_0,x,\lambda \xi ))||u(t,y)|dy \\
& \le \Vert \varphi_\lambda ^{(t-t_0)}(\cdot )\Vert_{L^2}
\Vert u(t, \cdot )\Vert_{L^2} \\
& = \Vert \varphi_\lambda (\cdot ) \Vert_{L^2}
\Vert u_0(\cdot )\Vert_{L^2}
= \Vert \varphi_0(\cdot ) \Vert_{L^2}
\Vert u_0(\cdot )\Vert_{L^2}.
\end{align*}
Hence $P(0, \varphi_0)$ holds. 
\par
Next we show that for fixed $\varphi_0 \in \calS (\re^n)\backslash \{0\}$, 
$P(\sigma +2b , \varphi_0)$ holds under 
the assumption that $P(\sigma, \varphi_0)$ holds
for all $\varphi_0\in \calS (\re^n )\backslash \{0\}$. 
To do so, it suffices to show that for fixed $\varphi_0$, 
there exists a positive constant $C_{a, \varphi_0}$ such that 
\begin{equation}
 |Ru(s,x(s;t_0,x,\lambda \xi), \xi (s;t_0,x,\lambda \xi))|
\le C_{a,\varphi_0}\lambda^{-(\sigma + 2b)}
\label{ineq-7}
\end{equation}
for all $x \in K $, all $\xi \in \Gamma$
with $1/a \le |\xi | \le a $, all $\lambda \ge 1$
and $0 \le s \le t_0$, since the first term of the right hand
side of \eqref{int-eq-1} is estimated by the condition on $u_0$. 
\par
Let $L$ be an integer. Taylor's expansion of $V (s, y)$ yields that 
\begin{multline}
 Ru(s, x^*,\xi^*) 
\\
= \sum_{2\le |\alpha |\le L-1} \frac{\partial_x^\alpha V(s,x^*)}{\alpha
 !}
 \int (y-x^*)^\alpha 
 \overline{\varphi_\lambda ^{(s-t_0)}(y-x^*)}
u(s,y) e^{-iy\xi^* }
dy + R_L,
\label{eq-10}
\end{multline}
where 
\begin{multline*}
 R_L(s,x^*, \xi^*) = L\sum_{|\alpha |=L}\frac{1}{\alpha !}
\frac{1}{\Vert \varphi_0\Vert^2_{L^2}}
\\
\times \iint 
\left(
\int 
\left(
\int_{0}^{1}\partial_x^\alpha V(s,x^*-\theta (x^* -y))(1-\theta
 )^{L-1} d\theta 
\right)
\right.
 (y-x^*)^\alpha 
\\
\times
 \overline{\varphi_\lambda^{(s-t_0)}(y-x^*)}
\left.
\varphi_\lambda^{(s-t_0)} (y-z) 
e^{-iy (\xi^* -\eta )}
dy
\right)
 W_{\varphi_\lambda^{(s-t_0)}}u(s, z, \eta ) 
 dz d\eta .
\end{multline*}
Here we use the inversion formula of the wave packet transform
$$
\frac{1}{\Vert \varphi \Vert^2_{L^2}}
W_{\varphi}^{-1} W_{\varphi}f (x) = f(x),
$$
where 
$$
W_{\varphi}^{-1} g (x) = \iint g(y,\xi )\varphi (y-x) e^{ix\xi} d\xi dy
$$
for a smooth tempered function $g(y,\xi )$ on $\re^{2n}$. 
\par
The strategy for the proof of \eqref{ineq-7} is the 
following. In Step 1, taking 
$b=\frac{1}{4}\min (2-\rho ,1 )$
 according to the value of $\rho$ which is the order of increasing of
$V(t,x)$ with respect to $x$ 
in the assumption \ref{ass-V}, we estimate the first term 
of the right hand side of \eqref{eq-10}. In Step 2, taking $L$
 sufficiently large according to the value of $\sigma$, 
we estimate the second term $R_L$ of the right 
hand side of \eqref{eq-10}. 
\par
(Step1) We estimate the first term of the right hand side of
 \eqref{eq-10}. 
Let $U_0(t)= e^{\frac{i}{2}t\triangle}$. 
Since 
$ x U_0(t) = U_0(t)(x-it\nabla )$, 
we have 
\begin{align*}
(y - x^*)^\alpha 
\varphi_\lambda ^{(t^*)}(y-x^*)
&= U_0(t^*)\left[
(y- x^*- it^* \nabla_y )^\alpha (\varphi_0)_\lambda
\right]
\\
= & \sum_{\beta + \gamma \le \alpha}
C_{\beta,\gamma}{t^*}^{|\beta |}\lambda^{b(|\beta |-|\gamma |) }
\varphi^{(\beta,\gamma)}_\lambda (t^*,y-x^*),
\end{align*}
where $\varphi^{(\beta,\gamma)}(x)= x^\gamma \partial_x^\beta
\varphi_0(x)$ 
and $\varphi^{(\beta,\gamma)}_\lambda (t,x)=
U_0(t)\left(\varphi^{(\beta, \gamma)}\right)_\lambda
 (x)$. 
The assumption of induction yields that 
\begin{align*}
& |(\text{The first term of the right hand side of \eqref{eq-10}}) |\\
& \le \sum_{2\le |\alpha |\le L-1}\sum_{\beta + \gamma =\alpha}
\frac{1}{\alpha !}| \partial_x^\alpha V(s,x^*) |
C_{\beta,\gamma}{t^*}^{|\beta |}\lambda^{b(|\beta |-|\gamma |) }
\left|W_{\varphi^{(\beta,\gamma)}_\lambda (t^*,x)}u(s,x^*,\xi^*)\right|
\\
&\le \sum_{2\le |\alpha |\le L-1}\sum_{\beta + \gamma =\alpha}
\frac{1}{\alpha !}C(1+|x^*|)^{\rho -|\alpha |}
C_{\beta,\gamma}{t^*}^{|\beta |}\lambda^{b(|\beta |-|\gamma |) }
C \lambda^{-\sigma } .
\end{align*}
Since 
\begin{multline}
 x^* = x(s;t_0,x,\lambda \xi ) = 
 x+ \int_{t_0}^{s}\dot{x}(s_1)ds_1 \\
 = x+ (s-t_0)\lambda \xi - \int_{t_0}^{s}(s-s_1)\nabla_x V(s_1,
 x(s_1))ds_1,
\label{eq-9}
\end{multline}
there exists a positive constant $\lambda_0$ such that 
\begin{equation}
\label{ode-est}
 |x^*| \ge \frac{1}{2a}|t^*|\lambda
\end{equation}
for all $\lambda \ge \lambda_0$, $\lambda^{-2b}\le |t^*| \le t_0$, 
$x\in K$ and $\xi \in \Gamma $ with $1/a \le |\xi |\le a$. 
( see Appendix A
 for the proof of \eqref{ode-est}).
 Hence we have for $\lambda^{-2b}\le |t^*| \le t_0$
\begin{align*}
& |(\text{The first term of the right hand side of \eqref{eq-10}}) |\\
&\le \sum_{2\le |\alpha |\le L-1}\sum_{\beta + \gamma =\alpha}
\frac{1}{\alpha !}C(1+|t^*|\lambda )^{\rho -|\alpha |}
C_{\beta,\gamma}{t^*}^{|\beta |}\lambda^{b(|\beta |-|\gamma |) }
C \lambda^{-\sigma } \\
&\le C'\sum_{2\le |\alpha |\le L-1}
\left(\lambda^{\rho -|\alpha |+b|\alpha |} + \min (\lambda
 ^{-1},\lambda^{2-\rho })+ \lambda^{-2b}\right)\lambda^{-\sigma}
\\ 
&\le C'' \lambda^{-2b -\sigma }, 
\end{align*}
since $2 b= \frac{1}{2}\min (2 -\rho, 1 )$. 
For $|t^*|< \lambda^{-2b}$, we have that
\begin{align*}
& |(\text{The first term of the right hand side of \eqref{eq-10}}) |\\
&\le \sum_{2\le |\alpha |\le L-1}\sum_{\beta + \gamma =\alpha}
\frac{1}{\alpha !}C
C_{\beta,\gamma}{t^*}^{|\beta |}\lambda^{b(|\beta |-|\gamma |) }
C \lambda^{-\sigma } \\
&\le \sum_{2\le |\alpha |\le L-1}\sum_{\beta + \gamma =\alpha}
\frac{1}{\alpha !}C
C_{\beta,\gamma}\lambda^{-b(|\beta |+|\gamma |) }
C \lambda^{-\sigma } 
= C' \lambda^{-2b -\sigma }.
\end{align*}
\par
(Step 2) We estimate $R_{L}$. 
Let $\psi_1, \psi_2$ be $C^\infty $ function on $\re$ satisfying 
\begin{align*}
&  \psi_1(s) = 
\begin{cases}
 1& \quad \text{for } s\le 1, \\
 0& \quad \text{for } s \ge 2, 
\end{cases}
\\
& \psi_2(s)  =
\begin{cases}
 0& \quad \text{for } s\le 1, \\
 1 & \quad \text{for } s \ge 2, 
\end{cases}
\\
&\psi_1(s) + \psi _2(s) =1 \quad \text{for all } s\in \re. 
\end{align*}
Take $d $ with $0 < d< b$. 
Putting $ V_\alpha (s,x^*,y)=
\int_{0}^{1}\partial_x^\alpha V(s,x^*-\theta (x^* -y))(1-\theta
 )^{L-1} d\theta $ and 
\begin{multline*}
 I_{\alpha,j} (s,x^*, \xi^*, \lambda )
 =\iiint
\psi_j \left(
\frac{\lambda^d |y-x^*|}{1+\lambda |t^*|} \right)
 V_\alpha (s,x^*,y) (y- x^*)^\alpha
\\
 \overline{\varphi_\lambda ^{(t^*)}(y-x^*)}
 \varphi_\lambda^{(t^*)}(y-z)
 W_{\varphi_\lambda^{(t^*)}}u(s, z, \eta ) e^{-iy (\xi^* -\eta )}
 dz d\eta dy
\end{multline*}
for $j=1,2$, 
we have 
\begin{equation}
 R_{L}(s,x^*,\xi ^*,\lambda ) =
L \sum_{|\alpha |=L}\frac{1}{\alpha !}
\frac{1}{\Vert \varphi_0\Vert_{L^2}^2}
\sum_{j=1}^{2}
 I_{\alpha,j} (s,x^*, \xi^*, \lambda ). 
\end{equation}
\par
We need to show that for $j=1,2$, 
there exists a positive constant $C_{\sigma, a,\varphi_0}$ 
such that 
\begin{equation}
 | I_{\alpha, j} (s,x^*, \xi^*, \lambda ) | \le
C_{\sigma, a,\varphi_0} \lambda ^{-\sigma -2b}
\label{ineq-9}
\end{equation}
for 
$\lambda \ge 1$, 
$x\in K$, $\xi \in \Gamma $ with $1/a \le |\xi |\le a$ 
and $0\le s\le t_0$. 
For $I_{\alpha,1,1}$, 
integration by parts and 
the fact that 
$(1-\triangle_y)e^{iy(\xi -\eta )} = (1+|\xi -\eta|^2)e^{iy(\xi -\eta)}$
yield that 
\begin{multline*}
 I_{\alpha, 1} (s,x^*, \xi^*, \lambda ) 
= \iiint 
\left(
1+ |\xi -\eta|^2
\right)^{-N}\\
\times (1-\triangle_y)^N
\left[
 \overline{\varphi_\lambda ^{(t^*)}(y-x^*)}
 \varphi_\lambda^{(t^*)}(y-z)
\psi_j \left(
\frac{\lambda^d |y-x^*|}{1+\lambda |t^*|} \right)
\right.
\\
\left.
\times 
 V_\alpha (s,x^*,y)(y-x^*)^\alpha \right] 
 W_{\varphi_\lambda^{(t^*)}}u(s, z, \eta ) e^{-iy (\xi^* -\eta )}
 dy d\eta dz. 
\end{multline*}
We take $d'$ with $0<d'<d$. 
Since
$|y-x^* | \le 2 
(1+\lambda |t^*|) \lambda^{-d }$ in 
the support of 
$
\psi_1 \left(
\frac{\lambda^d |y-x^*|}{1+\lambda |t^*|} \right)
$
with respect to $y$, 
the estimate \eqref{ode-est} shows that for $|t^*|\ge \lambda^{d'-1}$
and $\lambda \ge \lambda_0$ with some $\lambda_0 \ge 1$. 
\begin{align*}
 |\partial_x^\alpha V(s, x^* + \theta (y-x^*))||(y- x^*)^\alpha |&
\le C (1+ |x^* +\theta (y-x^*)|)^{\rho -L} 
(1+\lambda |t^*|)^{L} \lambda^{-d L}\\
&
\le C (1+ |x^*|-|y-x^*|)^{\rho -L} 
(1+\lambda |t^*|)^{L} \lambda^{-d L}\\
&
\le C (1+\lambda |t^*|)^{\rho } \lambda^{-d L}, 
\end{align*}
from which we have 
\begin{equation}
 |I_{\alpha,1}(s,x^*, \xi^*, \lambda ) |   \le 
C \lambda^{-d L} \lambda ^{l}, 
\label{ineq-13}
\end{equation}
where $l$ are positive numbers which are independent
of $L$. 
For $|t^*| \le \lambda^{d'-1}$, we have 
$|y-x^*|\le C(1+\lambda |t^*|)\lambda^{-d}
\le C \lambda^{1-d'+d-1}\le C\lambda^{d-d'}$, 
which shows \eqref{ineq-13} for some $l$. 
Hence \eqref{ineq-9} with $j=1$ holds if we take 
$L$ sufficiently large. 
\par
Finally we estimate $I_{\alpha,2}$.
Since $x U_0(t)= U_0(t)(x-it \nabla_x)$, $\partial_{x_j}U_0(t) =
 U_0(t)\partial_{x_j}$, 
$x\varphi_\lambda (x) = \lambda^{-b}(x\varphi)_\lambda (x)$ and 
$\nabla \varphi_\lambda (x) = \lambda^{b}(\nabla \varphi)_\lambda (x)$, 
we have for an integer $M$ and a multi-index $\alpha $
\begin{align}
 (1+ |x|^2)^M \partial_x^\alpha \varphi_\lambda^{(t)}(x)
& = U_0(t)\left[
(1+|x-it \nabla |^2)^M \partial_x^\alpha 
\varphi_{0,\lambda }(x)
\right]\label{204501_23Jun14}\\
=& U_0(t)\left[
\sum_{|\beta + \gamma |\le 2M}
C_{\beta , \gamma }(\lambda^{b}t)^{|\gamma |}
 \lambda^{-b(|\beta | - |\alpha |)}
(x^\beta \partial^\gamma_x\varphi_0)_\lambda 
\right]\label{204511_23Jun14}  \\
\le & 
\sum_{|\beta + \gamma |\le 2M}
C_{\beta , \gamma }(\lambda^{b}t)^{|\gamma |}
 \lambda^{-b(|\beta | - |\alpha |)}
 U_0(t)\left[
(x^\beta \partial^\gamma_x\varphi_0)_\lambda 
\right] .\label{204536_23Jun14}
\end{align}
Hence we have for $M,N \in \na$, 
\begin{align*}
&|I_{\alpha ,2}| \\
 = &\left|
\iiint 
\psi_2 \left(
\frac{\lambda^d |y-x^*|}{1+\lambda |t^*|} \right)
 V_\alpha (s,x^*,y) (y -x^* )^\alpha 
 \overline{\varphi_\lambda ^{(t^*)}(y-x^*)}
 \varphi_\lambda ^{(t^*)} (y-z)
 W_{\varphi_\lambda^{(t^*)}}u(s,z,\eta )
 e^{-iy (\xi^* -\eta )} dy
\right|
\\
 = &\left|
\iiint 
(1+|y-x^*|^2)^{-M} (1+ |\eta - \xi^* |^2)^{-N}
(1+|y-x^*|^2)^{M} 
\right. \\
\times &
(1-\triangle_y)^{N}\left[
\psi_2 \left(
\frac{\lambda^d |y-x^*|}{1+\lambda |t^*|} \right)
 V_\alpha (s,x^*,y) (y-x^*)^\alpha 
 \overline{\varphi_\lambda ^{(t^*)}(y-x^*)}
 \varphi_\lambda ^{(t^*)} (y-z)
 W_{\varphi_\lambda^{(t^*)}}u(s,z,\eta )
\right]
\\
& \phantom{xxxxxxxxxxxxxxxxxxxxxxxxxxxxxxxxxxxxxxxx}
\left.
\times e^{-iy (\xi^* -\eta )} dy
\right|
\\
\le & \sum_{|\alpha_1+ \cdots +\alpha_4|\le 2N}
\sum_{|\beta + \gamma |\le 2M+ |\alpha |
}\sum_{\alpha_3' \le \alpha_3}
C_{\alpha_1, \ldots , \alpha_4, \beta, \gamma, \alpha_3'}
(\lambda ^b |t^*|)^{|\gamma |}\lambda^{b(|\alpha_1 | - |\beta |)}
\\
&
\phantom{xx}
\times \iiint 
(1+|y-x^*|^2)^{-M} (1+ |\eta - \xi^* |^2)^{-N}
\left|
U_0(t^*)\left[
\left(x^\beta \partial_y^{\alpha_1+\gamma }\varphi_0 \right)_{\lambda }
\right](y-x^*)
\right|
\\
&
\times 
\left|
U_0(t^*)\left[
\left(\partial_y^{\alpha_3}\varphi_0 \right)_{\lambda }
\right](y-z)
\right|
(1+\lambda |t^*|)^{-|\alpha_3|}\lambda^{d|\alpha_3|}
\left|
\partial_x^{\alpha_3'}\psi_2
\right|
\left|\partial_y^{\alpha_4}V_\alpha \right|
\left| W_{\varphi_\lambda^{(t^*)}}u(s,z,\eta )\right|
dz d\eta dy
\end{align*}
\par
Since $|y-x^*| \ge \lambda^{-d}(1+\lambda |t^*|)$ in the support
 of $\psi_1(\lambda^{-d}|y-x^*|/(1+|t^*|\lambda ))$, 
we have with $M=m+n+1$ and $N=n+1$
\begin{multline*}
 |I_{\alpha ,2}| \le \sum_{|\alpha_1+ \cdots +\alpha_4|\le 2N}
\sum_{|\beta + \gamma |\le 2M+ |\alpha |
}\sum_{\alpha_3' \le \alpha_3}
C (1+ \lambda^{-2d}(1+\lambda |t^*|)^2)^{-m}
\Vert (1+|\cdot |^2)^{-n-1})\Vert_{L^2_y}
\\
\times \Vert (1+|\cdot |^2)^{-n-1})\Vert_{L^2_\eta}
\Vert x^\beta \partial_y^{\alpha_1+\gamma}\varphi_0\Vert_{L^2_y}
\Vert \partial_z \varphi_0\Vert_{L^2_z}
\times \Vert W_{\varphi_\lambda^{(t^*)}}u(s,z,\eta
 )\Vert_{L^2_{z,\eta}}. 
\end{multline*}
For $0\le t\le \lambda^{-2b}$, 
we have 
$$
|I_{\alpha ,2}| \le C \lambda^{-Mb}\lambda^{b(n+1+L)}\lambda^{-\sigma}
= C\lambda^{-b(M-n-1-L)+\sigma}
\le C \lambda^{-2b-\sigma}, 
$$
if we take $M \ge n+3+L$. 
For  $\lambda^{-2b}\le t \le t_0$, 
\begin{align*}
 |I_{\alpha ,2}| &\le C 
(1+ (\lambda^{1-d-2b})^2)^{-m}
\lambda^{b(2M+2N+L)}\lambda^{-\sigma}
\\
& \le C 
\lambda^{-2m(1-d-2b)}\lambda^{b(2m+4(n+1)+L)}\lambda^{-\sigma}
\\
& \le C 
\lambda^{-2m(1-d-3b)}\lambda^{b(4(n+1)+L)}\lambda^{-\sigma}. 
\end{align*}
Since $1-d-2b > 1-4b\ge 0$, we have $|I_{\alpha,2}|\le C
 \lambda^{-2b-\sigma}$, 
if we take $m$ sufficiently large. 
This shows \eqref{ineq-9} with $j=2$ for 
$x \in K $, $\xi \in \Gamma$ with $1/a \le |\xi | \le a $ and 
$\lambda \ge 1$ and $0\le s\le t_0$. 
\end{proof}
\begin{proof}[Proof of Corollary \ref{cor-1.5}]
\eqref{eq-9} shows that 
\begin{equation}
 x(0;t,x,\lambda \xi) = x-\lambda t\xi + \delta_1(\lambda )
\label{eq-12}
\end{equation}
with $\delta_1(\lambda)=O(\lambda^{\rho -1})$. 
In the same way as for \eqref{eq-12}, we have 
\begin{equation}
 \xi (0;t,x,\lambda \xi) = \lambda \xi + \delta_2(\lambda )
\label{eq-13}
\end{equation}
with $\delta_2(\lambda )=O(\lambda^{\rho -1})$. 
We show that 
\begin{equation}
W_{\varphi_{\lambda}^{(t^*)}}u_0(x-\lambda t\xi + \delta_1(\lambda ), 
\lambda \xi + \delta_2(\lambda ) )=
W_{\varphi_{\lambda}^{(t^*)}}u_0(x -\lambda t\xi , \lambda \xi ) 
+ (\text{lower order term}). 
\label{154101_30Jul14}
\end{equation}

We have
\begin{align*}
& W_{\varphi_{\lambda}^{(t^*)}}u_0(x-\lambda t\xi + \delta_1(\lambda ), 
\lambda \xi + \delta_2(\lambda ) ) \\
& = \int \varphi_{\lambda}^{(t^*)}
(y - (x-\lambda \xi t +\delta_1(\lambda ))) 
u_0(y)e^{-iy(\lambda \xi + \delta_2(\lambda ))}dy .
\end{align*}
By Taylor's expansion, we have with an integer $L$
 \begin{multline*}
   \varphi_{\lambda}^{(t^*)}(y - (x-\lambda \xi t +\delta_1(\lambda ))) 
= \varphi_{\lambda}^{(t^*)}(y - (x-\lambda \xi t))  \\
+ \sum_{1 \le |\alpha | \le L} \frac{1}{\alpha !}
\partial_x^{\alpha}\left( 
\varphi_{\lambda}^{(t^*)}(y - (x-\lambda \xi t )) 
\right)
\left(-\delta_1(\lambda )\right)^\alpha \\
+\sum_{|\alpha | =L+1} \frac{1}{\alpha !}
r_{\alpha }
\left(-\delta_1(\lambda )\right)^\alpha 
 \end{multline*}
and 
$$
e^{-y(\lambda \xi + \delta_2(\lambda ))}
= 
e^{-y\lambda \xi} 
\left(
1+ \sum_{1 \le |\alpha |}
\frac{1}{\alpha !}
\left(-i y \delta_1(\lambda )\right)^\alpha 
\right), 
$$
from which we obtain 
\begin{multline*}
 W_{\varphi_{\lambda}^{(t^*)}}u_0(x-\lambda t\xi + \delta_1(\lambda ), 
\lambda \xi + \delta_2(\lambda ) ) = \\
W_{\varphi_{\lambda}^{(t^*)}}u_0(x-\lambda t\xi , \lambda \xi ) \\
+ 
\sum_{1\le |\alpha | \le L }\sum_{1\le |\beta | }
\lambda^{b|\alpha |}\frac{(-\delta_1)^\alpha}{\alpha !}
\frac{(-\delta_2)^\beta }{\beta !}
W_{(\partial _x^\alpha \varphi)_\lambda^{(t^*)}}
\left[
y^\beta u(y)
\right]
(x-\lambda t\xi , \lambda \xi )
\\
+ 
\sum_{|\alpha | = L+1 }\sum_{1\le |\beta |}
\lambda^{b|\alpha |}\frac{(-\delta_1)^\alpha}{\alpha !}
\frac{(-\delta_2)^\beta }{\beta !}
\int R_\alpha 
y^\beta u(y)
e^{-iy\lambda \xi }dy. 
\end{multline*}
This implies \eqref{154101_30Jul14} with large $L$, since 
$|\delta_1 (\lambda )|, |\delta_2 (\lambda )| \le \lambda^{\rho -1}$,   
$W_{(\partial _x^\alpha \varphi)_\lambda^{(t^*)}}
\left[
y^\beta u(y)
\right]
(x-\lambda t\xi , \lambda \xi )$ is the same order of 
$
W_{\varphi_{\lambda}^{(t^*)}}u_0(x-\lambda t\xi , \lambda \xi )
$
with respect to $\lambda$ and the order of 
$\int R_\alpha 
y^\beta u(y)
e^{-iy\lambda \xi }dy$ with respect to $\lambda $
is estimated above by some constant. 
\end{proof}
%
%
\appendix
\section{Proof of the estimate \eqref{ode-est}}
In this appendix, we give the proof of the estimate \eqref{ode-est}. 
We fix $p$. We show the estimate \eqref{est-15} 
for $|t_0|\ge |t^*|\ge \lambda^{p-1}$, $\lambda \ge \lambda_0$, 
$x\in K$, $\xi \in \Gamma $ with $1/a \le |\xi |\le a$.
\begin{proof}
 The equation \eqref{eq-9} can be solved by Picard's iteration method. 
 We put $x^{(0)}(s)= x+(s-t_0)\lambda \xi $ and we define 
 $$
 x^{(N+1)}(s) = x+(s-t_0)\lambda \xi 
 - \int_{t_0}^{s}(s-s_1)\nabla_x V(s_1, x^{(N)}(s_1))ds_1
 $$
 for $N \ge 0$. Then we have the solution $x(s)$ of \eqref{eq-9} as 
 $x(s)=\lim_{N\rightarrow \infty}x^{(N)}(s)$.  
 We show that there exists a positive constant $\lambda_0\ge 1$
 such that 
 \begin{equation}
 \frac{1}{2a} |t^*|\lambda \le  |x^{(N)}(s)|
 \le 2 a|t^*|\lambda , 
 \label{est-15}
 \end{equation}
 for $\lambda \ge \lambda_0$, $\lambda^{p-1}\le |t^*| \le t_0$, 
$x\in K$ and $\xi \in \Gamma $ with $1/a \le |\xi |\le a$. 
 We only treat the case that $1\le \rho <2$. 
 We show \eqref{est-15} by induction with respect to $N$. 
 \par
Obviously \eqref{est-15} holds for $N=0$. 
 \par
Assuming that \eqref{est-15} holds for $N$, we have
 \begin{align*}
 |x^{(N+1)}(s)| & \ge |x+(s-t_0)\lambda \xi| -
 \left|
 \int_{t_0}^{s}|s-s_1||\nabla_x V(s_1, x^{(N)}(s_1))|ds_1
 \right| \\
 & \ge |t^*|\lambda |\xi| -|x|-
 \int_{s}^{t_0}|s-s_1|C(1+|x^{(N)}(s_1)|)^{\rho -1}ds_1
 \\
 & \ge |t^*|\lambda |\xi| -|x|-
 C \int_{s}^{t_0}|s-s_1|
(1+2(|t_0-s_1|\lambda |\xi|)^{\rho -1})ds_1
 \\
 & \ge |t^*|\lambda |\xi| -|x|-
 C |t^*|^2
-C \lambda^{\rho -1}|\xi |^{\rho -1}|t^*|^{\rho +1}
 \\
 & \ge |t^*|\lambda |\xi|
 \left(
 1- \frac{|x|}{|t^*|\lambda |\xi|}
- C \frac{|t_0|}{\lambda |\xi |}
-C|t_0|^\rho \lambda^{\rho -2} |\xi |^{\rho -2}
 \right) \\
 & \ge |t^*|\lambda |\xi|
 \left(
 1- \frac{a|x|}{\lambda^{p}}
 -C \frac{a|t_0|}{\lambda}
-C\frac{a^{2-\rho}|t_0|^\rho }{\lambda^{2-\rho}}
 \right). 
 \end{align*}
Since $p>0$ and $2-\rho >0$, 
there exists a constant $\lambda_0 \ge 1$ such that 
 $$
 1- \frac{a|x|}{\lambda^{p}}
- C \frac{a|t_0|}{\lambda }
-C\frac{a^{2-\rho}|t_0|^\rho }{\lambda^{2-\rho}}
\ge \frac{1}{2}
$$
for $\lambda \ge \lambda_0$.
Hence we have 
$ |x^{(N+1)}(s)|\ge \frac{1}{2} |t^*|\lambda |\xi| \ge 
\frac{1}{2a} |t^*|\lambda$. 
\par
 In the same way as above, we can show that 
 $$
 |x^{(N+1)}(s)| \le 2 |t^*|\lambda a
 $$
 for $\lambda \ge \lambda_0$, $\lambda^{p-1}\le |t^*| \le t_0$, 
$x\in K$ and $\xi \in \Gamma $ with $1/a \le |\xi |\le a$, 
 assuming that \eqref{est-15} holds for $N$. 
\end{proof}
%
%

\end{document}